\newtheorem{theorem}{Theorem}
\newcommand\E{\mathbb{E}}
\newcommand\Prb{\mathbb{P}}
\newcommand\cB{\mathcal{B}}
\newcommand\cE{\mathcal{E}}
\newcommand\Bin{\mathrm{Bin}}
\title{The asymptotic number of prefix normal words}
\author{Paul Balister\thanks{Department of Mathematical Sciences, University of Memphis, Memphis TN 38152.
Email: \texttt{pbalistr@memphis.edu}. Partially supported by NSF grant DMS 1600742.}
\and Stefanie Gerke\thanks{Mathematics Department, Royal Holloway University of London, Egham TW20\thinspace0EX, UK.
Email: \texttt{Stefanie.Gerke@rhul.ac.uk}.}}
\begin{document}

\maketitle

\begin{abstract}
We show that the number of prefix normal binary words of length $n$ is $2^{n-\Theta((\log n)^2)}$.
We also show that the maximum number of binary words of length $n$ with a given fixed prefix normal form
is $2^{n-O(\sqrt{n\log n})}$.
\end{abstract}

Keywords: Prefix normal words, random construction

\section{Introduction}

Given a binary word $w=(w_i)_{i=1}^n\in\{0,1\}^n$ of length~$n$, denote by $w[j,k]$ the subword of length $k-j+1$
starting at position $j$ and ending at position~$k$, that is, $w[j,k]=w_jw_{j+1}\dots w_k$.
Let $|w|_1$ be the number of 1s in the word~$w$. We define the \emph{profile\/}
$f_w\colon\{0,\dots,n\}\to\{0,\dots,n\}$ of $w$ by
\[
 f_w(k)=\max_{0\le j\le n-k}|w[j+1,j+k]|_1,
\]
so that $f_w(k)$ is the maximum number of 1s in any subword of $w$ of length~$k$.
The word $w$ is called \emph{prefix normal\/} if for all $0\le k\le n$ this number is maximized at $j=0$,
so that
\[
 |w[1,k]|_1\ge|w[j+1,j+k]|_1\qquad\text{for }0\le j\le n-k.
\]
In other words, a word $w$ is called prefix normal if the number of $1$s in any subword
is at most  the number of $1$s in the prefix of the same length.

If $j<k$ then we can remove the common subword $w[j+1,k]$ of $w[1,k]$ and $w[j+1,j+k]$,
so that $|w[1,k]|_1\ge|w[j+1,k+j]|_1$ iff $|w[1,j]|_1\ge|w[k+1,k+j]|_1$.
Thus to show that $w$ is prefix normal it is enough to check that
\begin{equation}\label{e1}
 |w[1,k]|_1\ge |w[j+1,j+k]|_1\qquad\text{for }k\le j\le n-k.
\end{equation}

Prefix normal words were introduced by G.~Fici and Z.~Lipt\'ak in \cite{pnf2conf} because
of their connection to binary jumbled pattern matching. Recently, prefix normal words
have been used because of their connection to trees with a prescribed number of vertices
and leaves in caterpillar graphs~\cite{caterpillar}.

The number of prefix normal words of length $n$ is listed as sequence A194850 in
The On-Line Encyclopedia of Integer Sequences (OEIS)~\cite{OEIS}. We prove the following result,
conjectured in~\cite{pnf1} (Conjecture 2) where also weaker
upper and lower bounds were shown, see also~\cite{pnf1x}.

\begin{theorem}\label{t:1}
 The number of prefix normal words of length $n$ is $2^{n-\Theta((\log n)^2)}$.
\end{theorem}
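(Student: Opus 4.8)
The plan is to prove the two bounds separately, in both cases through a product measure $\mu=\bigotimes_{i=1}^n\mathrm{Ber}(p_i)$ on $\{0,1\}^n$ and the observation \eqref{e1} that it is enough to compare the prefix $w[1,k]$ with the \emph{disjoint} windows $w[j+1,j+k]$, $k\le j\le n-k$.

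\emph{Lower bound.} I would exhibit such a $\mu$ of entropy $n-O((\log n)^2)$ under which $w$ is prefix normal with probability $1-o(1)$. Concretely, fix $A=C\sqrt{\log n}$ with $C$ a large absolute constant, and set $p_i=1$ for $i\le 4A^2$ and $p_i=\tfrac12+A i^{-1/2}$ for $i>4A^2$. Then every prefix of length $k\le 4A^2$ equals $1^k$, so \eqref{e1} is automatic there; for $k>4A^2$ and any valid window, the expected surplus $\E|w[1,k]|_1-\E|w[j+1,j+k]|_1$ is, since $(p_i)$ is non-increasing, at least $\sum_{i=1}^k(p_i-p_{i+k})=\Theta(A\sqrt k)$, which for $C$ large exceeds $C'\sqrt{k\log n}$. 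A Hoeffding bound on the difference of the two independent length-$k$ window sums, union-bounded over the $O(n^2)$ pairs $(j,k)$, then gives $\Prb_\mu[w\text{ is not prefix normal}]=o(1)$. To convert this into a count, note $-\log_2\mu(w)$ has mean $n-\sum_i(1-H(p_i))=n-\Theta(A^2\log n)=n-\Theta((\log n)^2)$ and variance $\Theta((\log n)^2)$ (with $H$ the binary entropy), so Chebyshev confines $1-o(1)$ of the $\mu$-mass to a set $\mathcal T$ of words with $\mu(w)\le 2^{-n+\Theta((\log n)^2)}$; since the prefix normal words also carry $1-o(1)$ of the mass, the number of them is at least $\Prb_\mu[\{\text{prefix normal}\}\cap\mathcal T]\big/\max_{w\in\mathcal T}\mu(w)\ge 2^{n-\Theta((\log n)^2)}$.

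\emph{Upper bound.} Here $w$ is uniform on $\{0,1\}^n$ and I want $\Prb[w\text{ prefix normal}]\le 2^{-\Omega((\log n)^2)}$. Put $k_\ell=\lfloor n/2^\ell\rfloor$ and tile $w[\lceil n/2\rceil+1,n]$ into its $2^{\ell-1}$ consecutive blocks of length $k_\ell$; \eqref{e1} then forces, for every $\ell=1,\dots,L:=\lfloor\log_2 n\rfloor$,
\[
 b_\ell:=|w[1,k_\ell]|_1\ \ge\ M_\ell:=\max_{0\le r<2^{\ell-1}}\bigl|w[\lceil n/2\rceil+rk_\ell+1,\ \lceil n/2\rceil+(r+1)k_\ell]\bigr|_1 .
\]
For one $\ell$, $M_\ell$ is the maximum of $2^{\ell-1}$ i.i.d.\ $\Bin(k_\ell,\tfrac12)$ variables, independent of $b_\ell$, hence lies near $k_\ell/2+\Theta(\sqrt{k_\ell\,\ell})$ while $b_\ell$ lies near $k_\ell/2$, so $\Prb[b_\ell\ge M_\ell]=2^{-\Theta(\ell)}$. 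The crux is to multiply these over $\ell$ to reach $2^{-\Theta(\sum_\ell\ell)}=2^{-\Theta((\log n)^2)}$: writing $b_\ell=b_{\ell+1}+I_\ell$ with the shell counts $I_\ell=|w[k_{\ell+1}+1,k_\ell]|_1$ mutually independent and independent of $w[\lceil n/2\rceil+1,n]$, one conditions on the second half and then on $I_L,I_{L-1},\dots$ in turn, bounding each conditional probability $\Prb[b_\ell\ge M_\ell\mid\text{second half},I_L,\dots,I_{\ell+1}]$ — on the event that all deeper constraints hold — by $2^{-\Omega(\ell)}$ and taking the telescoping product.

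\emph{Where the difficulty lies.} The obstacle is that the prefixes $b_\ell$ are nested, so the scale-$\ell$ events are positively correlated and the bare conditional estimate is not directly $2^{-\Omega(\ell)}$: from the deeper constraints one only learns $b_{\ell+1}\ge M_{\ell+1}$, i.e.\ a \emph{lower} bound on $b_{\ell+1}$, which alone makes the step trivial. Making it work needs a quantitative handle on the walk $\ell\mapsto b_\ell$ together with the maxima: one uses that $b_{\ell+1}$ is in fact concentrated near $k_{\ell+1}/2$ and that $M_\ell,M_{\ell+1}$ are near their typical values $k_\ell/2+\Theta(\sqrt{k_\ell\ell})$ and $k_\ell/4+\Theta(\sqrt{k_\ell\ell})$, so that the increment $I_\ell$ would have to beat its mean $k_\ell/4$ by $\Omega(\sqrt{k_\ell\ell})$, i.e.\ by $\Omega(\sqrt\ell)$ standard deviations — while separately catching the rare scales where $M_\ell$ is anomalously large (there the single constraint $b_\ell\ge M_\ell$ is itself rare enough to absorb, and a union bound over the $O(n)$ windows is too weak to exclude it), and discarding the $O(\log\log n)$ smallest scales, whose combined contribution $O((\log\log n)^2)$ is negligible against $(\log n)^2$. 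Getting all of these to compose is essentially all the work of the upper bound; the rest is routine Chernoff-bound bookkeeping.
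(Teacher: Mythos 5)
Your lower bound is correct and is essentially the paper's argument: the same biased product measure $p_i=\tfrac12+\Theta(\sqrt{\log n/i})$ with an all-ones initial segment, the same reduction to disjoint windows via \eqref{e1}, and the same Hoeffding-plus-union-bound step. The only difference is in how the measure is converted into a count: you use Chebyshev concentration of $-\log_2\mu(w)$ (a typical-set argument), where the paper uses the entropy decomposition; both are valid and give $|\cB|\ge 2^{n-\Theta((\log n)^2)}$.

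The upper bound is where the problem lies. Your plan is sound in outline, but the step you yourself call ``essentially all the work'' is left unproved, and the one sentence you offer in its place is wrong. After conditioning on the second half and on $I_L,\dots,I_{\ell+1}$, the event $b_\ell\ge M_\ell$ becomes $I_\ell\ge M_\ell-b_{\ell+1}$, and to get a factor $2^{-\Omega(\ell)}$ you need $b_{\ell+1}\le M_\ell-k_\ell/4-\Omega(\sqrt{k_\ell\,\ell})$. Your justification --- that ``$b_{\ell+1}$ is in fact concentrated near $k_{\ell+1}/2$'' --- is false under the conditioning you have imposed: on the event that the deeper constraints hold, $b_{\ell+1}$ is forced to be at least $M_{\ell+1}\approx k_{\ell+1}/2+\Theta(\sqrt{k_{\ell+1}(\ell+1)})$, i.e.\ to sit $\Theta(\sqrt\ell)$ standard deviations above its unconditional mean. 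What actually rescues the argument is (i) the geometric gap $\sqrt{k_{\ell+1}(\ell+1)}\le(1-\delta)\sqrt{k_\ell\,\ell}$ supplied by the factor $2$ between scales, together with (ii) an inductive hypothesis that controls not merely $\Prb\bigl[\bigcap_{m>\ell}\{b_m\ge M_m\}\bigr]$ but the entire upper tail of the overshoot $b_{\ell+1}-M_{\ell+1}$ on that event, with a geometrically decaying bound in the overshoot; without (ii), the large-overshoot realizations of $b_{\ell+1}$ could carry all the conditional mass and destroy the per-scale gain. This is exactly what the paper's Claim does: its bound $n^{-2c^2(t-t_0+1)/3}\beta_t^j/(1-\beta_t)$ carries the extra parameter $j$ and the factor $\beta_t^j$ for precisely this reason, and the induction on $t$ is the heart of the proof. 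You would need to state and prove the analogous two-parameter induction. (You should also restrict to scales with $k_\ell\gg\ell$: for $k_\ell=O(\ell)$ your single-scale estimate $\Prb[b_\ell\ge M_\ell]=2^{-\Theta(\ell)}$ already fails, since there $M_\ell=k_\ell$ typically and the event has probability $2^{-k_\ell}$. Dropping those scales is harmless, but it is a different exclusion from the one you mention.)

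For what it is worth, the route you propose is genuinely different from the paper's in one respect and would be attractive if completed: you use the random thresholds $M_\ell$ --- maxima over disjoint blocks in the second half, which are literally instances of \eqref{e1} --- in place of the paper's first stage, which derives deterministic thresholds $|w[1,k]|_1\ge k/2+c\sqrt{k\log n}$ from a separate counting argument about extensions of a deficient prefix. This eliminates a whole stage of the proof at the price of controlling the lower tails of the $M_\ell$ (doubly exponentially small in $\ell$, hence fine after discarding $\ell\le O(\log\log n)$, as you note). Your exponent also accumulates differently --- $\Theta(\ell)$ per scale summed over $\Theta(\log n)$ scales, versus the paper's $\Theta(\log n)$ per scale --- but both yield $\Theta((\log n)^2)$. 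As submitted, however, the upper bound has a genuine gap at the conditional estimate.
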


Given an arbitrary binary word $w$ of length $n$, the \emph{prefix normal form}
$\tilde w$ of $w$ is the unique binary word of length $n$ that satisfies
\[
 |\tilde w[1,k]|_1=f_w(k).
\]
Note that for any $w$, $f_w(k)\le f_w(k+1)\le f_w(k)+1$, so $\tilde w$ is well-defined.
Moreover, we can define an equivalence relation $\sim$ on binary words of length $n$
by
\[
 w\sim v \qquad\Longleftrightarrow\qquad f_w=f_v \qquad\Longleftrightarrow\qquad \tilde w=\tilde v.
\]
Indeed, $\tilde w$ is just the lexicographically maximal element of the equivalence class
$[w]$ of $w$ under this equivalence relation.

In~\cite{pnf2conf} it is asked how large can an equivalence class $[w]$ be. In other words,
what is the maximum number of words of length $n$ that have the same fixed prefix normal form.
This maximum number is listed in the OEIS as sequence A238110~\cite{OEIS}.
From Theorem~\ref{t:1} it is clear that it must be at least $2^{\Theta((\log n)^2)}$.
However, we show that it is much larger.

\begin{theorem}\label{t:2}
 For each $n$ there exists a prefix normal word $w$ such that the number of binary
 words of length $n$ with prefix normal form $w$ is $2^{n-O(\sqrt{n\log n})}$.
\end{theorem}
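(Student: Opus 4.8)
The idea is to build $w$ from a block structure so that a subexponentially‑thin, but still exponentially large, family of words is forced into the single class $[w]$. Fix $\ell\approx\sqrt{n\log n}$ (even), cut $\{1,\dots,n\}$ into $t\approx n/\ell$ consecutive blocks of length $\ell$, and let $\mathcal{G}\subseteq\{0,1\}^n$ be the words having exactly $\ell/2$ ones in each block. Since $\binom{\ell}{\ell/2}=2^{\ell-\Theta(\log\ell)}$ and $t\cdot\Theta(\log\ell)=\Theta(\sqrt{n\log n})$, we get $|\mathcal{G}|=2^{n-\Theta(\sqrt{n\log n})}$. Writing $C_v(j)=|v[1,j]|_1$, every $v\in\mathcal{G}$ satisfies $C_v(q\ell)=q\ell/2$ for all $q$, so the profile $f_v(k)=\max_j\bigl(C_v(j+k)-C_v(j)\bigr)$ has a rigid ``skeleton'' at multiples of $\ell$, and only the pieces of $C_v$ strictly inside blocks vary with $v$. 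If one can pin the whole profile across a subfamily $\mathcal{F}\subseteq\mathcal{G}$ with $|\mathcal{F}|$ still $2^{n-O(\sqrt{n\log n})}$, then taking $w=\tilde v$ for any $v\in\mathcal{F}$ gives $[w]\supseteq\mathcal{F}$, which proves the theorem.

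To pin $f_v$ from above: a window of length $k$ meets at most one partial block at each end, so crudely $f_v(k)\le\lceil k/2\rceil+\ell/2$, and imposing on every block a mild internal near‑balance condition (bounding how far its internal prefix sums may deviate from the diagonal) sharpens this to a ceiling $f(k)$ depending only on $k$ and $k\bmod\ell$. To pin $f_v$ from below: for each $k$ we must exhibit, in every $v\in\mathcal{F}$, a length‑$k$ window attaining $f(k)$, necessarily one straddling block boundaries. This is arranged by reserving a carefully chosen collection of blocks and prescribing part of their content — via a difference‑cover type argument, so that for every residue $r=k\bmod\ell$ and every relevant lag the required near‑extremal straddling window is present — while keeping the reserved blocks few enough and cheaply enough constrained (of order $O(\log n)$ bits each) that $|\mathcal{F}|$ stays $2^{n-O(\sqrt{n\log n})}$. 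Finally one checks the common profile $f$ is legitimate: in particular $f(k)<k$ for all but $O(\log n)$ small values of $k$, so membership in $[w]$ does not secretly force a rare long run, and $\mathcal{F}\subseteq[\tilde v]$ for $v\in\mathcal{F}$.

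\textbf{Main obstacle.} The hard step is exactly this pinning: controlling $f_v(k)$ for $k$ \emph{not} a multiple of $\ell$, i.e.\ the contribution of windows that straddle block boundaries. For a typical $v\in\mathcal{G}$ this contribution fluctuates on scale $\sqrt{\ell}\gg1$, so without the extra near‑balance and witness conditions the profile is genuinely not pinned, and making those conditions simultaneously strong enough (to collapse the fluctuation) and cheap enough (to preserve the count) is delicate; the exponent $\sqrt{n\log n}$ emerges from balancing the cost of the block partition, the cost of the near‑balance condition, and the cost of the reserved witness blocks. A cleaner‑to‑state alternative avoids explicit witnesses: show that after a suitable regularisation of $\mathcal{G}$ the number of distinct profiles that can occur is at most $2^{O(\sqrt{n\log n})}$, whence some profile has $2^{n-O(\sqrt{n\log n})}$ preimages by pigeonhole — but this route meets the same difficulty of controlling straddling windows, now recast as bounding how many profile shapes they can produce.
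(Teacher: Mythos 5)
There is a genuine gap, and you have located it yourself: the ``pinning'' of the profile is the entire content of the theorem, and your proposal leaves it as an unresolved plan. Moreover, the specific route you sketch (a two-sided ``near-balance'' condition inside each block plus reserved witness blocks) runs into a quantitative obstruction that you name but do not overcome. To force $f_v(k)$ to take the \emph{same} value for every $v$ in your family and every $k$, the straddling windows must have their fluctuations collapsed to $O(1)$; but the number of $\pm1$ walks of length $\ell$ confined to a band of width $m$ around the diagonal is roughly $(2\cos\frac{\pi}{m+2})^{\ell}=2^{\ell}e^{-\Theta(\ell/m^2)}$, so keeping the cost per block at $O(\log n)$ forces $m\gg\sqrt{\ell/\log n}$, a band far too wide to pin anything. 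A symmetric balance condition cannot be simultaneously strong enough and cheap enough, and the difference-cover witnesses only supply the lower bound on $f_v(k)$, not the matching upper bound.

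The paper's resolution is to break the symmetry in two ways. First, the per-block condition is made \emph{one-sided}: each block $c$ of length $2t$ is a Catalan (ballot) sequence, $|c[1,i]|_1\le i/2$ for all $i$ with $|c|_1=t$, which caps every window lying inside the Catalan part at $t+\lfloor k/2\rfloor$ ones while costing only $\log_2(2^{2t}/C_t)=\Theta(\log t)$ bits per block --- the same order as your $\binom{\ell}{\ell/2}$ count, so the total loss stays $O(\sqrt{n\log n})$. Second, instead of scattering witnesses, one plants a single extremal prefix $(10)^t1^{2t}$ at the front: for every $k$ the maximizing window is one containing the run $1^{2t}$, giving exactly $t+\lfloor k/2\rfloor$ ones (or $k$ ones when $k\le 2t$), so every word $w=(10)^t1^{2t}c_1\cdots c_{(n-4t)/2t}$ has the identical prefix normal form $1^{2t}(01)^{(n-2t)/2}$ with no case analysis over residues or lags. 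Your block scale $\approx\sqrt{n\log n}$ and your accounting of the entropy cost are correct; what is missing is this asymmetric mechanism (one-sided ballot condition plus a dominating initial run) that makes the common profile an actual theorem rather than a target.
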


\section{Proofs}

\begin{proof}[Proof of the lower bound of Theorem~\ref{t:1}.]
To prove the lower bound we will need to construct $2^{n-\Theta((\log n)^2)}$ prefix normal
words of length~$n$. We will do so by giving a random construction and showing that this
construction almost always produces a prefix normal word.

Fix a constant $c> \sqrt{2}$ and define
\[
 p_k=\begin{cases}
 \frac{1}{2}+c\sqrt{\frac{\log n}{k}},&\text{for }k>16c^2\log n;\\
 1,&\text{for }k\le 16c^2\log n.
\end{cases}
\]
Write $k_0:=\lfloor 16c^2\log n\rfloor$ so $p_k=1$ if $k\le k_0$, and $p_k\in[\frac12,\frac34]$
for $k>k_0$. Let $w$ be a random word with each letter $w_k$ chosen to be 1 with probability~$p_k$,
independently for each $k=1,\dots,n$.
Clearly \eqref{e1} holds for all $k\le k_0$, so assume $k>k_0$.
By comparing the integral $\int c\sqrt{\frac{\log n}{k}}\,dk=2c\sqrt{k\log n}+C$
with the corresponding Riemann sum, we note that
\[
 \sum_{i=1}^k p_i = \tfrac{k}{2}+2c\sqrt{k\log n} + O(1)
\]
uniformly for $k>k_0$ (and uniformly in~$c$). Indeed, the approximation of the integral by the Riemann sum
has error at most the maximum term, due to the monotonicity of the integrand, and the additive constant is also $O(1)$ by considering the case $k=k_0$.
From this we estimate the expected difference
\begin{equation}\label{difference}
 |w[1,k]|_1-|w[j+1,j+k]|_1=\sum_{i=1}^kw_i+\sum_{i=j+1}^{k+j}(1-w_i)-k
 \end{equation}
as
\[
 \mu:=\E\big(|w[1,k]|_1-|w[j+1,j+k]|_1\big)
 =2c\sqrt{k\log n}-2c\sqrt{(j+k)\log n}+2c\sqrt{j\log n}+O(1).
\]
This expression is minimized when $j$ is as small as possible, i.e., $j=k$. Thus
\[
 \mu\ge 2(2-\sqrt2)c\sqrt{k\log n}+O(1)> c\sqrt{k\log n}
\]
for sufficiently large $n$.
By~\eqref{difference},  $|w[1,k]|_1-|w[j+1,j+k]|_1$ can be considered as the sum of
$2k$ independent Bernoulli random variables (with an offset of $-k$).

We recall the \emph{Hoeffding bound\/}~\cite{hoeffding} that states
that if $X$ is the sum of $n$ independent random variables in the interval $[0,1]$
then for all $x\ge 0$,
\begin{equation}\label{eq:hoeffding}
 \Prb\big(X-\E(X)\ge x\big)\le \exp\{-2x^2/n\}\quad\text{and}\quad
 \Prb\big(X-\E(X)\le -x\big)\le \exp\{-2x^2/n\}.
\end{equation}
(Note that these two bounds are essentially the same bound as the second can be easily
derived from the first by exchanging the roles of the $0$s and $1$s but we state
them both here for convenience.)

Let $\mu^*= \E\big(\sum_{i=1}^k w_i +\sum_{i=j+1}^{k+j}(1-w_i)\big)$. Note that $\mu^*=\mu+k$.
We have
\begin{align*}
 \Prb\big(|w[1,k]|_1<|w[j+1,j+k]|_1\big)
 &\stackrel{\eqref{difference}}{=} \Prb\left(\sum_{i=1}^k w_i +\sum_{i=j+1}^{k+j}(1-w_i) <k \right)\\
 &\le \Prb\left(\sum_{i=1}^k w_i +\sum_{i=j+1}^{k+j}(1-w_i) -\mu^* <k- \mu^* \right)  \\
 &\le \Prb\left(\sum_{i=1}^k w_i +\sum_{i=j+1}^{k+j}(1-w_i) -\mu^* <  -  \mu \right)  \\
 &\stackrel{\eqref{eq:hoeffding}}{\le} \exp\big\{-2\mu^2/(2k)\big\}\\
 &\le\exp\big\{-c^2\log n \big\}
\end{align*}
Hence if $c$ is large enough ($c>\sqrt{2}$) then $\Prb(|w[1,k]|_1<|w[j+1,j+k]|_1)=o(n^{-2})$.
Taking a union bound over all possible values of $k$ and~$j$,
we deduce that $w$ is prefix normal with probability $1-o(1)$.

It remains to count the number of such~$w$. For any discrete random variable $X$, define the
\emph{entropy\/} of the distribution of $X$ as
\[
 H(X):=\sum_x -\Prb(X=x)\log_2\Prb(X=x),
\]
where the sum is over all possible values $x$ of $X$ and the logarithm is to base~2.
If the random variable is a Bernoulli random variable, 
we call $H(\mathrm{Be}(p))$ the {\it binary entropy function}~$H_b(p)$.
 We use the following
well-known (and easily verified) facts about the entropy.
\begin{enumerate}[label=H\arabic*)]
\item  If $X_1,\dots,X_n$ are independent discrete random variables and $X=(X_1,\dots,X_n)$,
then $H(X)=\sum_{i=1}^n H(X_i)$.
\item If $X$ takes on at most $N$ possible values with positive probability then $H(X)\le \log_2 N$.
\item \label{H3} The Taylor series of  the binary entropy function in a neighbourhood of $1/2$ is
\[ H_b(p)=1 - \frac{1}{2\ln 2}\sum_{n=1}^\infty \frac{(1-2p)^{2n}}{n(2n-1)}.\]
In particular, for a Bernoulli random variable with $\Prb(X=1)=\frac12+x$, $H(X)=1-\Theta(x^2)$.
\item If $\cB$ is subset of possible values of $X$ we have
\[
 H(X)=H(X\mid X\in\cB)\Prb(X\in\cB)+H(X\mid X\notin\cB)\Prb(X\notin\cB)+H(1_{X\in\cB}),
\]
where $X\mid \cE$ denotes the distribution of $X$ conditioned on the event $\cE$
and $1_{\cE}$ denotes the indicator function of~$\cE$.
\end{enumerate}

Applying these results to our random word $w$ we have
\[
 H(w)=\sum_{k>k_0}^n H(w_k)=n-k_0-\Theta\left(\sum_{k=k_0}^n c^2\tfrac{\log n}{k}\right)
 =n-\Theta((\log n)^2).
\]
On the other hand, if $\cB$ is the set of prefix normal words, then
\begin{align*}
 H(w)&=H(w\mid w\in\cB)\Prb(w\in\cB)+H(w\mid w\notin\cB)\Prb(w\notin\cB)+H(1_{w\in\cB})\\
 &\le \log_2(|\cB|)\Prb(w\in\cB)+n\,\Prb(w\notin\cB)+1\\
 &= n+1 - (n-\log_2|\cB|)(1-o(1)).
\end{align*}
We deduce that $n-\log_2|\cB|\le\Theta((\log n)^2)$ and hence $|\cB|\ge 2^{n-\Theta((\log n)^2)}$.
\end{proof}

\begin{proof}[Proof of the upper bound in Theorem~\ref{t:1}.]
We will prove the upper bound in two parts. Firstly we will show that most prefix normal words
have to contain a good number of $1$s in any prefix of reasonable size 
as we cannot extend a prefix with too few 1s to a prefix normal word  in many ways.
Secondly, we will show that there are at most $2^{n-\Theta (\log^2n)}$ ways to construct a word
which has sufficiently many $1$s in all reasonably sized prefixes.

Assume $\log n\le k\le \sqrt n$ and consider the first $\lfloor\sqrt n\rfloor$ blocks
of size $k$ of~$w$. If $|w[1,k]|_1=d$ then the number of choices for the second and subsequent blocks
is at most $2^k(1-\Prb(\Bin(k,\tfrac12)>d))$, and hence the number of choices for $w$ is at most
\[
 2^n\big(1-\Prb\big(\Bin(k,\tfrac12)>d\big)\big)^{\lfloor\sqrt n\rfloor-1}
 \le 2^{n-\Omega(\sqrt n\,\Prb(\Bin(k,1/2)>d))}.
\]
If $\Prb(\Bin(k,\tfrac12)>d)>n^{-1/3}$, say, then there are far fewer than
$2^{n-\Theta((\log n)^2)}$ choices of such prefix normal words, even allowing for summation
over all such $k$ and~$d$.

Using Stirling's formula one can show that  for $1/2<\lambda<1$ and $\lambda k$ integral,
\[  \Prb\big(\Bin(k,\tfrac12)\geq \lambda k) = \sum_{i=\lambda k}^k\binom{k}{i}2^{-k}\geq \frac{2^{k H_b(\lambda)-k}}{\sqrt{8k\lambda(1-\lambda)}} \geq \frac{2^{k H_b(\lambda)-k}}{\sqrt{2k}} , \]
see for example \cite{ash} for a detailed proof.

Thus, by \ref{H3}, we have 
\[
 \Prb\big(\Bin(k,\tfrac12)>\tfrac{k}{2}+x\big)\ge \frac{1}{\sqrt{2k}}2^{-\Theta(x^2/k)},
\]
provided $x<k/2$. Thus if $\log n\le k\le \sqrt n$ and $\Prb(\Bin(k,\tfrac12)>d)>n^{-1/3}$
we can deduce that $d\ge \frac{k}{2}+c\sqrt{k\log n}$ for some small universal constant $c>0$.
Thus, without loss of generality, we can restrict to prefix normal words with the property
that
\begin{equation}\label{e2}
 |w[1,k]|_1\ge \tfrac{k}{2}+c\sqrt{k\log n}\qquad\text{for all $k$ with}\qquad \log n\le k\le\sqrt n.
\end{equation}
Define $d_0=c\sqrt{\log n}$, which for simplicity we shall assume is an integer.
(One can reduce $c$ slightly to ensure this is the case.)
Define $\cE_t$ to be the event that \eqref{e2} holds with $k=4^t$,
i.e., that $|w[1,4^t]|_1\ge 2^{2t-1}+2^td_0$. Let $t_0$ be the smallest $t$ such that $4^t\ge\log n$
and let $t_1$ be the largest $t$ such that $4^t\le\sqrt{n}$. We bound the probability
that a uniformly chosen $w\in\{0,1\}^n$ satisfies $\cE_{t_0}\cap\cE_{t_0+1}\cap\dots\cap\cE_{t_1}$.

Write $\cE_{t,j}$ for the event that $|w[1,4^t]|_1=2^{2t-1}+2^td_0+j$ and
$\cE_{t,\ge j}$ for the event that $|w[1,4^t]|_1\ge 2^{2t-1}+2^td_0+j$. Thus $\cE_t$ is just
$\cE_{t,\ge0}$. Write $\cE_{\le t}$ for the intersection $\cE_{t_0}\cap\cE_{t_0+1}\cap\dots\cap \cE_t$.

{\bf Claim:} For $t\in[t_0,t_1]$ and $j\ge0$,
\[
 \Prb\big(\cE_{\le t-1}\cap\cE_{t,\ge j}\big)\le  n^{-2c^2(t-t_0+1)/3}\beta_t^j/(1-\beta_t),
\]
where $\beta_t:=\exp\{-2^{3-t}d_0/3\}$.
Note that $\beta_t<1$ for all $t\in[t_0,t_1]$. For the case $t=t_0$ we simply use the
Hoeffding bound \eqref{eq:hoeffding} to obtain
\begin{align*}
 \Prb(\cE_{t_0,\ge j})&=\Prb\big(\Bin(4^{t_0},\tfrac12)\ge 2^{2t_0-1}+2^{t_0}d_0+j\big)
 \le  \exp\big\{-2(2^{t_0}d_0+j)^2/4^{t_0}\big\}\\
 &\le   \exp\big\{-2d_0^2-4jd_0/2^{t_0}\big\}
 = n^{-2c^2}\beta_{t_0}^{3j/2}<n^{-2c^2/3}\beta_{t_0}^j/(1-\beta_{t_0})
\end{align*}
as required.

Now assume the claim is true for~$t$. We first want to give a bound on
$\Prb(\cE_{\le t}\cap\cE_{t+1,\ge j})$. Note that if $\cE_{\le t-1} \cap \cE_{t,i}$ holds
then in particular $\cE_{t,i}$ holds and thus for $\cE_{t+1,\ge j}$ to hold we still need at least
\[
 2^{2(t+1)-1} + 2^{t+1}d_0 +j - 2^{2t-1}-2^td_0-i = 3\cdot 2^{2t-1} + 2^{t}d_0 +j -i
\]
$1$s in the interval $[4^t +1, 4^{t+1}]$.
Thus we get
\[
 \Prb\big(\cE_{\le t}\cap\cE_{t+1,\ge j}\big)
 \le\sum_{i\ge0}\Prb(\cE_{\le t-1}\cap\cE_{t,i})
 \Prb\big(|w[4^t+1,4^{t+1}]|_1\ge 3\cdot 2^{2t-1}+2^t d_0+j-i\big).
\]

Note  that there are $4^{t+1}-4^t=3\cdot 4^t$ elements in
the interval $[4^t +1, 4^{t+1}]$ and that we expect
\[
 \frac{3\cdot 4^t}{2} = 3\cdot 2^{2t-1}
\]
$1$s in this interval.
Hence by Hoeffding
\begin{align*}
 \Prb\big(|w[4^t+1,4^{t+1}]|_1\ge 3\cdot 2^{2t-1}+2^t d_0+j\big)
 &\le \exp\big\{ -2(2^td_0+j)^2/(3\cdot 4^t)\big\}\\
 &\le \exp\big\{-2d_0^2/3-4jd_0/(3\cdot 2^t)\big\}\\
 &= n^{-2c^2/3}\beta_{t+1}^j.
\end{align*}
Note that the final inequality is even true for negative $j$:
for $j\ge - 2^td_0$ Hoeffding's bound holds, and for $j\le -2^t d_0$
the bound on the probability is larger than~$1$.
If we let $p_i=\Prb(\cE_{\le t-1}\cap\cE_{t,\ge i})$ then we have
\begin{align*}
 \Prb(\cE_{\le t}\cap\cE_{t+1,\ge j})
 &\le \sum_{i\ge 0}(p_i-p_{i+1}) n^{-2c^2/3}\beta_{t+1}^{j-i}\\
 &\le  n^{-2c^2/3}\beta_{t+1}^j\big(p_0+(1-\beta_{t+1})(\beta_{t+1}^{-1}p_1+\beta_{t+1}^{-2}p_2+\dots)\big).
\end{align*}
Now by induction, $p_i\le n^{-2c^2(t-t_0+1)/3}\beta_t^i/(1-\beta_t)$. As $\beta_t=\beta_{t+1}^2$ we have
\begin{align*}
 \Prb(\cE_{\le t}\cap\cE_{t+1,\ge j})
 &\le n^{-2c^2(t-t_0+2)/3}\beta_{t+1}^j (1+(1-\beta_{t+1})(\beta_{t+1}+\beta_{t+1}^2+\dots))/(1-\beta_{t+1}^2)\\
 &=n^{-2c^2(t-t_0+2)/3}\beta_{t+1}^j(1+\beta_{t+1})/(1-\beta_{t+1}^2)\\
 &= n^{-2c^2(t-t_0+2)/3}\beta_{t+1}^j/(1-\beta_{t+1}),
\end{align*}
as required. Thus the claim is proved.

Now we take $t=t_1$ and $j=0$ to deduce that $\Prb(\cE_{\le t_1})\le  n^{-2c^2(t_1-t_0+1)/3}/(1-\beta_{t_1})$.
Recall $\beta_{t_1} = \exp(-2^{3-t_1}d_0/3)$, $d_0=c\sqrt{\log n}$,
and that $t_1$ was chosen so $\sqrt{n}/4 < 4^{t_1} \le \sqrt{n}$.
Thus, for large~$n$,  $n^{-1/4} < 2^{3-t_1}d_0/3 < 1$.
Using the inequality $e^{-x}\le 1-x/2$, which holds for $0\le x\le 1$, we deduce that
$1-\beta_{t_1}\ge n^{-1/4}/2$, and so $1/(1-\beta_{t_1}) = O(n^{1/4})$.
Also, we have  $t_1-t_0+1=\Theta(\log n)$ as $n\to\infty$   and thus
$\Prb(\cE_{\le t_1})\le 2^{-\Omega((\log n)^2)}$.
As the probability that a uniformly chosen word $w$ satisfies $\cE_{\le t_1}$ is at most $2^{-\Omega((\log n)^2)}$,
we deduce that the number of prefix normal words is at most $2^{n-\Theta((\log n)^2)}$.
\end{proof}

\begin{proof}[Proof of Theorem~\ref{t:2}.]
Fix an integer $t\approx\sqrt{n\log n}$ and assume for simplicity that $n$ is a multiple of $2t$.
Define $w=(10)^t1^{2t}c_1c_2\dots c_{(n-4t)/2t}$, where $c_i$ are arbitrary Catalan sequences of length $2t$.
Here a \emph{Catalan sequence} is a binary sequence $c$ of length $2t$ such that $|c[1,i]|_1\le i/2$ for all $i=1,\dots,2t$
and $|c|_1=t$. It is well-known that the number of choices for $c_i$ is the \emph{Catalan number}
\[
 C_t=\frac{1}{t+1}\binom{2t}{t}\sim \frac{2^{2t}}{\sqrt\pi t^{3/2}}.
\]
It is easy to see that the prefix normal form of any $w$ of this form is
\begin{equation}\label{e3}
 \tilde w=1^{2t}(01)^{(n-2t)/2}.
\end{equation}
Indeed, there is a subword $1^k$ of $w$ for all $k\le 2t$. For $k>2t$, if we write $k=2tq+r$ with $0\le r<2t$
then we have a subword $(10)^{r/2}1^{2t}c_1\dots c_{q-1}$ or $0(10)^{(r-1)/2}1^{2t}c_1\dots c_{q-1}$
which is of length $t$ and has the requisite number $t+\lfloor k/2\rfloor$ of 1s. On the other hand,
the definition of a Catalan sequence implies no other subword of length $k$ containing the $1^{2t}$ subword
can possibly have more 1s. Any substring intersecting the $1^{2t}$ and of length greater than $2t$
can be replaced by one containing the $1^{2t}$ with at least as many ones.
And finally, any subword of $w$ length $k>2t$ not intersecting the $1^{2t}$ subword
(so contained within the $c_1\dots c_{(n-4t)/2t}$ subword) can have at most $t+\lfloor k/2\rfloor$ 1s as an end-word
of $c_i$ contains at most $t$ 1s and there are at most $\lfloor k/2\rfloor$ 1s in the initial subword
of $c_{i+1}c_{i+2}\dots$ of length~$k$.

It remains to count the number of possible $w$'s. This is just
\[
 C_t^{(n-4t)/(2t)}=2^{n-4t-(\log t)3n/4t+O(n/t)}.
\]
Taking $t\sim\sqrt{n\log n}$ gives $2^{n-O(\sqrt{n\log n})}$ words $w$ satisfying~\eqref{e3}.
\end{proof}

{\bf Acknowledgement:} We would like to thank the anonymous referees for their helpful comments and their quick response.


\begin{thebibliography}{9}



\bibitem{ash} Robert B.~Ash,  {\sl Information Theory}, Interscience, Wiley 1966. 
\bibitem{pnf1} P\'eter Burcsi, Gabriele Fici, Zsuzsanna Lipt\'ak, Frank Ruskey, Joe Sawada.
{\sl Normal, Abby Normal, Prefix Normal},
in: A.Ferro, F. Luccio, P. Widmayer eds., Fun with Algorithms. FUN 2014, LNCS vol. 8496, Springer. pp.~74--88,

\bibitem{pnf1x} P\'eter Burcsi, Gabriele Fici, Zsuzsanna Lipt\'ak, Frank Ruskey, Joe Sawada.
{\sl On Prefix Normal Words and Prefix Normal Forms}, { Theor.\ Comp.\ Science} {\bf 658} (2017) 1--13.

\bibitem{pnf2conf} Gabriele Fici, Zsuzsanna Lipt\'ak.
{\sl On Prefix Normal Words},
In Proc. of the 15th Intern.\ Conf.\ on Developments in Language
Theory (DLT 2011), volume 6795 of LNCS, pages 228--238. Springer, 2011.


\bibitem{hoeffding} Wassily Hoeffding,
{\sl Probability Inequalities for sums of bounded random variables}
Journal of the American Statistical Association {\bf 58} (1963) 13--30.

\bibitem{caterpillar} Alexandre Blondin Masse, Julien de Caruful,
Alain Goupil, M\'{e}lodie Lapointe, \'{E}mile Nadeau, \'{E}lise Vandomme
{\sl Leaf Realization problem, caterpillar graphs and prefix normal words},
{ Theor.\ Comp.\ Science} {\bf 732} (2018) 1--13.

\bibitem{OEIS} {\sl The On-Line Encyclopedia of Integer Sequences},
\texttt{https://oeis.org/}.

\end{thebibliography}
\end{document}